\DeclareMathOperator{\Mod}{Mod}
\DeclareMathOperator{\Gdim}{Gdim}
\newtheorem{thm}{Theorem}[section]
\newtheorem{prop}[thm]{Proposition}
\newtheorem{lem}[thm]{Lemma}
\theoremstyle{definition}
\newtheorem{dfn}[thm]{Definition}
\begin{document}


\title{A Note on Gabriel Dimension for Idioms }
\author{Angel Zald\'\i var Corichi}
\email{zaldivar@matem.unam.mx}
\address{Instituto de Matem\'aticas, Ciudad Universitaria, UNAM, M\'exico, D. F., 04510, M\'exico.}

\begin{abstract}
The aim of this note is to illustrate that the definition and construction of the Gabriel dimension for modular lattices in the sense of \cite{1} is the same as the module case in the document \cite{2}. 
\end{abstract}
 \maketitle

\section{Introduction}\label{sec:sec1}

The following definition is taken, literally, from \cite{1}, top of the page 135:
\begin{quote}
\lq\lq Let be $A$ a modular upper-continuous lattice with $0$ and $1$.
We define the \emph{Gabriel dimension} of $A$, denoted by $\Gdim(A)$, using transfinite recursion.
We put $\Gdim(A)=0$ if and only if $A=\left\{0\right\}$. Let $\alpha$ be a nonlimit ordinal and assume that the Gabriel dimension $\Gdim(A')=\beta$ has already been defined for lattices with $\beta<\alpha$. We say that $A$ is it $\alpha-simple$ is for each $a\neq 0$ in $A$ we have $\Gdim [0,a]\nless\alpha$ and $\Gdim[a,1]<\alpha$. We then say that $\Gdim(A)=\alpha$ if $\Gdim (A)\nless\alpha$ but for every $a\neq 1$ in $A$ there exist a $b>0$ such that $[a,b]$ is $\beta$-simple for some $\beta\leq\alpha$.\rq\rq
\end{quote}

In the second paragraph of the same page, it is stated the following:
\begin{quote} \lq\lq Consider $a\in A$. If $\Gdim(0,a)=\alpha$ then we say that $\alpha$ is the \emph{Gabriel dimension} of $a$ and we write $\Gdim (a)=\alpha$. If $[0,a]$ is $\alpha$-simple then $a$ is said to be an $\alpha$-simple element of $A$.\rq\rq
\end{quote}

We will rewrite this definition in the idiom context, mimicking the construction of Gabriel dimension, in the module category, given in \cite{2}. Basically, the proofs are the same as in \cite{2}.
In fact, the two constructions are related via the slicing technique, for more details about the slicing and relation with dimension in module categories and lattices we refer to \cite{2} and \cite{6}.

\section{Gabriel dimension for idioms}

To begin with, fix an idiom $A$ (that is a complete, modular, upper-continuous lattice ), let $[a,b]=\{x\in A\mid a\leq x\leq b\}$ the \emph{interval} of $a\leq b$. Denote by $\EuScript{I}(A)$ the set of all intervals of $A$ and by $\EuScript{O}(A)=\EuScript{O}$ the set of all \emph{trivial} intervals, that is, for an element $a\in A$ the trivial interval of it is $[a,a]=\{a\}$. Next we recall the definition of the Gabriel dimension for an idiom.

An interval $[a,b]$ is \emph{simple} if $[a,b]=\{a,b\}$ observe now that this is equivalent to say: \[\text{ An interval } [a,b] \text{ is } \emph{simple} \text{ if for every } a\leq x\leq b \text{ one has } a=x \text{ or } b=x \] and immediately this is also equivalent to :
\[\text{ An interval } [a,b] \text{ is } \emph{simple} \text{ if for every } a\leq x\leq b \text{ one has } [a,x]\in\EuScript{O}(A) \text{ or } [x,b]\in\EuScript{O}(A) \]
with this in mind the relative version of the $\EuScript{O}(A)-$\emph{simple} is direct, that is, given a set of intervals $\mathcal{B}\subseteq\EuScript{I}(A)$ an interval $[a,b]$ is $\mathcal{B}$-\emph{simple} if for every $a\leq x\leq b$ one has $[a,x]\in\mathcal{B}$ or $[x,b]\in\mathcal{B}$. Observe now that this produce an operation in the set all sets of intervals on $A$ more over this operation is defined in a particular kind of sets of intervals. As in the case with module classes closed under certain kind of operations one introduce the following, mimicking the module idea:

Given two intervals $I=[a,b]$ and $J=[a',b']$, we say that $I$ is a \emph{subinterval} of $J$, denoted by $I\rightarrow J$, if $I=[a,b]$ and $J=[a',b']$ with $a'\leq a\leq b\leq b'$ in $A$. We say that $J$ and $I$ are \emph{similar}, denoted by  $J\sim I$, if there are $l,r\in A$ with associated intervals \[L=[l,l\vee r]\;\;\;\; [l\wedge r,r]=R\] where $J=L$ and $I=R$ or $J=R$ and $I=L$. Clearly, this a reflexive and symmetric relation. Moreover, if $A$ is modular, this relation is just the canonical lattice isomorphism between $L$ and $R$. 

A set of intervals $\mathcal{A}\subseteq {\EuScript I}(A)$ is \emph{abstract} if is not empty and it is closed under $\sim$, that is, 
\[J\sim I\in\mathcal{A}\Rightarrow J\in\mathcal{A}.\] 
An abstract set $\mathcal{B}$ is a \emph{basic} set of intervals if it is closed by subintervals, that is, 
\[J\rightarrow I\in\mathcal{B}\Rightarrow J\in\mathcal{B}\] 
for all intervals $I,J$. A set of intervals $\mathcal{C}$ is a \emph{congruence} set if it is basic and closed under abutting intervals, that is, 
\[[a,b], [b,c]\in \mathcal{C}\Rightarrow [a,c]\in\mathcal{C}\]
for elements $a,b,c\in A$. A basic set of intervals $\mathcal{B}$ is a \emph{pre-division} set if $$\forall\; x\in X\left[[a,x]\in\mathcal{B}\Rightarrow [a,\bigvee X]\in\mathcal{B}\right]$$ for each $a\in A$ and $X\subseteq [a,\bar{1}]$. A set of intervals $\mathcal{D}$ is a \emph{division} set if it is a congruence set and a pre-division set.
Put $\mathcal{D}(A)\subseteq\mathcal{C}(A)\subseteq\EuScript{B}(A)\subseteq\EuScript{A}(A)$ the set of all division, congruence, basic and abstract set of intervals in $A$. This gadgets can be understood like certain classes of modules in a module category $R$-$\Mod$, that is, classes closed under isomorphism, subobjects, extensions and coproducts. From this point of view $\EuScript{C}(A)$ and $\EuScript{D}(A)$ are the idioms analogues of the Serre classes and the torsion (localizations) classes in module categories.

Is straightforward to see that $\EuScript{B}(A)$ and $\EuScript{A}(A)$ are frames also $\mathcal{D}(A)$ and $\mathcal{C}(A)$ are frames too this is not directly, the details of these are in \cite{5}. Let be $\EuScript{S}mp(\mathcal{B})$ the set of all $\mathcal{B}$-simples intervals, this set is basic provided $\mathcal{B}$ is a basic set. To define the gabriel dimension of an idiom, specifically the Gabriel dimension of an interval we need to produce a \emph{filtration}.This filtration is related with the simples and with \emph{critical} intervals that is, let be $\mathcal{B}\in\EuScript{B}(A)$ and denote by $\EuScript{C}rt(\mathcal{B})$ the set of intervals such that for all $a\leq x\leq b$ we have $a=x$ or $[x,b]\in\mathcal{B}$; this is the set of all $\mathcal{B}$-\emph{critical} intervals. Note that $\EuScript{S}mp(\EuScript{O})=\EuScript{C}rt(\EuScript{O})$ and $\EuScript{C}rt(\mathcal{B})\leq \EuScript{S}mp(\mathcal{B})$.

As we mention before the set $\mathcal{D}(A)$ is a frame in particular is a complete lattice therefore for any basic set $\mathcal{B}$ there exists the least division set that contains it $\mathcal{D}vs(\mathcal{B})$, this description set up an operation in the frame of basic sets of intervals, that is, a function $\EuScript{K}pr\colon\EuScript{B}(A)\rightarrow\EuScript{B}(A)$ such that $\mathcal{B}\leq\EuScript{K}pr(\mathcal{B})$, $\EuScript{K}pr(\mathcal{B})\leq \EuScript{K}pr(\mathcal{A})$ whenever $\mathcal{B}\leq\mathcal{A}$ and $\EuScript{K}pr(\mathcal{B}\cap\mathcal{A})=\EuScript{K}pr(\mathcal{B})\cap\EuScript{K}pr(\mathcal{A})$, this kind of functions are called \emph{pre-nucleus} a \emph{nucleus} is an idempotent pre-nucleus. The $\mathcal{D}vs$ construction is a nucleus on $\EuScript{B}(A)$ with this we can set up $\EuScript{G}ab:=\mathcal{D}vs\circ\EuScript{C}rt$ this is the Gabriel pre-nucleus of $A$ and one can prove that $\mathcal{D}vs\circ\EuScript{C}rt=\EuScript{G}ab=\mathcal{D}vs\circ\EuScript{S}mp$. 
We can iterate $\EuScript{G}ab$ over all ordinals to obtain a chain of division sets $\EuScript{O}(A)\leq\EuScript{G}ab(\EuScript{O})\leq\ldots\leq\EuScript{G}ab^{\alpha}_(\EuScript{O})\leq\ldots$ where $\EuScript{G}ab^{\alpha}(\EuScript{O})$ is defined by $\EuScript{G}ab^{\alpha+1}(\EuScript{O}):=\EuScript{G}ab(\EuScript{G}ab^{\alpha}(\EuScript{O}))$ and $\EuScript{G}ab^{\alpha}(\EuScript{O})=\mathcal{D}vs(\bigcup\{\EuScript{G}ab^{\beta}(\EuScript{O})\mid\beta<\alpha\})$ for non-limit and limit ordinals.
Now with this filtration we can define the \emph{Gabriel dimension} of an interval $[a,b]$ to be the extended ordinal $G(a,b)\leq\alpha$ if and only if $[a,b]\in\EuScript{G}ab^{\alpha}(\EuScript{O})$. The central objective of this short note is to illustrate that the construction of \cite{1} produce this filtration but with another point of view. For a more detail treatment of this construction and related topics with dimension and inflator theory the reader is refer to \cite{6}, \cite{7} and \cite{14}.

\begin{dfn}\label{l1}
We define the \emph{Gabriel dimension}, $\Gdim$ of $[a,b]$ as follows:
\begin{enumerate}
\item $\Gdim(a,b)=0\Leftrightarrow a=b$.
\item $\Gdim(a,b)=\alpha'\Leftrightarrow \Gdim(a,b)\nleq \alpha$ and $$\left(\forall a\leq x< b\right)\left[\exists x<y\leq b \right]\left[\exists \beta\leq\alpha'\right]\left[[x,y]\text{ is }\beta\text{-simple}\right],$$
for ordinals $\alpha$ and $\alpha'$ its successor.
\item $\Gdim(a,b)=\lambda\Leftrightarrow \left(\forall a\leq x< b\right)\left(\exists x<y\leq b\right)\left[\exists \beta<\lambda\right]\left[[x,y]\text{ is }\beta\text{-simple}\right],$
for limit ordinals $\lambda$.
\end{enumerate}

Here, $\beta\text{-simple}$ means that for the successor ordinal $\beta$, the interval $[a,b]$ is $\beta$\emph{-simple} if: $$\left(\forall a<x\leq b\right)\left[\Gdim(a,x)\nless\beta \text{ and } \Gdim(x,b)<\beta\right]$$
\end{dfn}

Following \cite{2}, we say that the only $0$-simple and  $\lambda$-simple intervals, for all  limit ordinals $\lambda$,  are the trivial ones, that is, $\EuScript{O}(A)$. 
Then, condition (3) of Definition \ref{l1} is reinterpreted as: 
$$ \Gdim(a,b)=\lambda\Leftrightarrow \left(\forall a\leq x< b\right)\left(\exists x<y\leq b\right)\left[\exists \beta\leq\lambda\right]\left[[x,y]\text{ is }\beta\text{-simple}\right].$$

Next we make these definitions accumulative. Following \cite{2}, define the set $\EuScript{S}[\alpha]$ of $\alpha$-simple intervals, with $\alpha$ an ordinal, as
 $$[a,b]\in\EuScript{S}[\alpha]\Leftrightarrow\left(\forall a<x\leq b\right)\left(\Gdim(a,x)\nleq\alpha\text{ and } \Gdim(x,b)\leq\alpha\right),$$ 
and then proceed step by step as follows:
\begin{enumerate} 
\item $\EuScript{D}(0)=\EuScript{O}(A)$.
\item $\EuScript{D}(\alpha')=\EuScript{D}(\alpha)\cup \EuScript{S}[\alpha]$
\item $\EuScript{D}(\lambda)=\bigcup\left\{\EuScript{D}(\alpha)\;|\;\alpha<\lambda\right\}$,
\end{enumerate}
for each ordinal $\alpha$ and limit ordinal $\lambda$.

In Definition \ref{l1} there is a (strange) quantification $(\exists\beta)$ in items (2) and (3).  To deal with this quantification and make everything more clear, we introduce the following definitions:

\begin{dfn}
\label{l2}
For each $\mathcal{C}\subseteq\EuScript{I}(A)$, set: $$[a,b]\in(\forall\exists)(\mathcal{C})\Leftrightarrow\left(\forall a\leq x<b\right)\left(\exists x<y\leq b\right)\left[[x,y]\in\mathcal{C}\right].$$
\end{dfn}

Immediately one observes that, if $\mathcal{C}$ is basic then $(\forall\exists)(\mathcal{C})=\EuScript{D}vs(\mathcal{C})$.  Note also that the operator $(\forall\exists)(\_)$ is monotone. (For the details about the $\EuScript{D}vs$-construction see \cite{5}-Theorem 5.6)
With this we redefine:

\begin{dfn}[$\EuScript{L}\text{-construction}$]
\label{l3}
For each interval $[a,b]$ and for each ordinal $\alpha$ and limit ordinal $\lambda$, we set:
\begin{enumerate}
\item $[a,b]\in\EuScript{L}[0]\Leftrightarrow a=b$,
\item $[a,b]\in\EuScript{L}[\alpha']\Leftrightarrow [a,b]\in\left(\forall\exists\right)(\EuScript{D}(\alpha'))\text{ and } [a,b]\notin\EuScript{L}(\alpha)$,
\item $[a,b]\in\EuScript{L}[\lambda]\Leftrightarrow [a,b]\in(\forall\exists)(\EuScript{D}(\lambda))$,
\end{enumerate}
where: 
\begin{enumerate}
\item $\EuScript{L}(0)=\EuScript{O}(A)$
\item $\EuScript{L}(\alpha')=\EuScript{L}(\alpha)\cup\EuScript{L}[\alpha']$
\item $\EuScript{L}(\lambda)=\bigcup\left\{\EuScript{L}(\alpha)\;|\;\alpha<\lambda\right\}\cup\EuScript{L}[\lambda]$,
\end{enumerate}
and 
\begin{enumerate}
\item $\EuScript{D}(0)=\EuScript{O}(A)$
\item $\EuScript{D}(\alpha')=\EuScript{D}(\alpha)\cup\EuScript{S}[\alpha]$
\item $\EuScript{D}(\lambda)=\bigcup\left\{\EuScript{D}(\alpha)\;|\;\alpha<\lambda\right\}$,
\end{enumerate}
where again $$ [a,b]\in\EuScript{S}[\alpha]\Leftrightarrow\left(\forall a<x\leq b\right)\left[[a,x]\notin\EuScript{L}(\alpha)\text{ and }[x,b]\in\EuScript{L}(\alpha)\right],$$
is the accumulative version of $\alpha$-simplicity. Here $\EuScript{L}[\alpha]$ is the set of all intervals with $\Gdim(a,b)=\alpha$ and $\EuScript{L}(\alpha)=\bigcup\left\{\EuScript{L}[\beta]\;|\;\beta\leq\alpha\right\}$ the set of intervals with Gabriel dimension $\Gdim(a,b)\leq\alpha$.

\end{dfn}

\begin{lem}
\label{l4}
For each ordinal $\alpha$ we have $$\EuScript{L}(\alpha')=\EuScript{L}(\alpha)\cup(\forall\exists)(\EuScript{D}(\alpha')).$$
\end{lem}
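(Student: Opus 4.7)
The plan is straightforward: unfold the recursive definition of $\EuScript{L}(\alpha')$ from Definition \ref{l3} and rewrite the result using an elementary set-theoretic identity. By the successor clause of the $\EuScript{L}(\_)$-recursion, $\EuScript{L}(\alpha')=\EuScript{L}(\alpha)\cup\EuScript{L}[\alpha']$, while the defining condition for $\EuScript{L}[\alpha']$ expresses it as exactly the intervals lying in $(\forall\exists)(\EuScript{D}(\alpha'))$ but outside $\EuScript{L}(\alpha)$; that is, $\EuScript{L}[\alpha']=(\forall\exists)(\EuScript{D}(\alpha'))\setminus\EuScript{L}(\alpha)$.

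Given this reformulation, I would establish the two inclusions separately. For $\EuScript{L}(\alpha')\subseteq\EuScript{L}(\alpha)\cup(\forall\exists)(\EuScript{D}(\alpha'))$, it is enough to note that $\EuScript{L}[\alpha']\subseteq(\forall\exists)(\EuScript{D}(\alpha'))$, so the union $\EuScript{L}(\alpha)\cup\EuScript{L}[\alpha']$ sits inside $\EuScript{L}(\alpha)\cup(\forall\exists)(\EuScript{D}(\alpha'))$. For the reverse inclusion, I would take $[a,b]$ in $\EuScript{L}(\alpha)\cup(\forall\exists)(\EuScript{D}(\alpha'))$ and split into cases: if $[a,b]\in\EuScript{L}(\alpha)$ then it lies in $\EuScript{L}(\alpha')$ automatically; otherwise $[a,b]\in(\forall\exists)(\EuScript{D}(\alpha'))$ and $[a,b]\notin\EuScript{L}(\alpha)$, which puts it in $\EuScript{L}[\alpha']\subseteq\EuScript{L}(\alpha')$. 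This is nothing more than the identity $X\cup(Y\setminus X)=X\cup Y$ applied with $X=\EuScript{L}(\alpha)$ and $Y=(\forall\exists)(\EuScript{D}(\alpha'))$.

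I do not expect any real mathematical obstacle: no properties of upper-continuity, of the nucleus $\EuScript{D}vs$, or of similarity are invoked; the lemma is a bookkeeping statement whose only content is that the negative clause ``$\notin\EuScript{L}(\alpha)$'' in the definition of $\EuScript{L}[\alpha']$ can be harmlessly absorbed into the union with $\EuScript{L}(\alpha)$. Its purpose is presumably to prepare for the next step, where the remark following Definition \ref{l2} will let one replace $(\forall\exists)(\EuScript{D}(\alpha'))$ by $\EuScript{D}vs(\EuScript{D}(\alpha'))$ once one checks that $\EuScript{D}(\alpha')$ is basic, thereby aligning the $\EuScript{L}$-construction with the Gabriel nucleus iteration $\EuScript{G}ab^{\alpha}(\EuScript{O})$ described in the introduction.
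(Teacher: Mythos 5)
Your proposal is correct and is essentially identical to the paper's own proof, which also just unfolds the successor clause $\EuScript{L}(\alpha')=\EuScript{L}(\alpha)\cup\EuScript{L}[\alpha']$ and absorbs the negative conjunct via the identity $X\cup(Y\setminus X)=X\cup Y$, written there as a chain of logical equivalences rather than two inclusions. No gap.
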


\begin{proof}
For each interval $[a,b]$ we have : 
\begin{align*}
[a,b]\in\EuScript{L}(\alpha')& \Leftrightarrow [a,b]\in\EuScript{L}(\alpha)\text{ or }\EuScript{L}[\alpha']\\
& \Leftrightarrow [a,b]\in\EuScript{L}(\alpha)\text{ or }\left([a,b]\in(\forall\exists)(\EuScript{D}(\alpha'))\text{ and } [a,b]\notin\EuScript{L}(\alpha) \right)\\
&\Leftrightarrow [a,b]\in\EuScript{L}(\alpha)\text{ or }[a,b]\in(\forall\exists)(\EuScript{D}(\alpha')).
\end{align*}
\end{proof}

\begin{dfn}[Accumulative $\EuScript{L}\text{-construction}$]\label{l5}
For each ordinal $\alpha$ and limit ordinal $\lambda$, introduce:
$$
\begin{array}{rclcrcl}
\EuScript{L}(0) & =&  \EuScript{O}(A) &  & \EuScript{D}(0)  & = & \EuScript{O}(A)\\
\EuScript{L}(\alpha')  & = & \EuScript{L}(\alpha)\cup(\forall\exists)(\EuScript{D}(\alpha'))&  & \EuScript{D}(\alpha')& = & \EuScript{D}(\alpha)\cup\EuScript{S}[\alpha]\\
\EuScript{L}(\lambda)   & = &  \bigcup\left\{\EuScript{L}(\alpha)\;|\alpha<\lambda\;\right\}\cup(\forall\exists)(\EuScript{D}(\lambda)) & &    \EuScript{D}(\lambda)&=&\bigcup\left\{\EuScript{D}(\alpha)\;|\;\alpha<\lambda\right\}\\
\end{array}
$$
Where, again, in the step: 
$$ [a,b]\in\EuScript{S}[\alpha]\Leftrightarrow\left(\forall a<x\leq b\right)\left[[a,x]\notin\EuScript{L}(\alpha)\text{ and }[x,b]\in\EuScript{L}(\alpha)\right]$$ for each interval.
\end{dfn}

As Simmons says, this is getting easier to read, and the construction gives two ascending chains of sets of intervals $$\EuScript{L}(0)\subseteq\cdots\subseteq\EuScript{L}(\alpha)\subseteq\cdots\qquad \qquad\text{and}\qquad \qquad\EuScript{D}(0)\subseteq\cdots\subseteq\EuScript{D}(\alpha)\subseteq\cdots,$$ 
and the aim of this note is to show that $\EuScript{L}(-)$ produces the Gabriel filtration in $A$ for $\EuScript{O}(A)$, that is, 
$$\EuScript{L}(\alpha)=\EuScript{G}ab^{\alpha}(\EuScript{O}(A)).$$ 
Then, we must first show:

\begin{thm}\label{l6}
For each ordinal $\alpha$, the collection $\EuScript{L}(\alpha)$ is a division set in $A$.
\end{thm}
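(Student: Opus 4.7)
The natural strategy is transfinite induction on $\alpha$, using the decomposition of Lemma \ref{l4} (and its obvious limit analogue read off from Definition \ref{l5}) together with the identity recorded right after Definition \ref{l2}: for every basic $\mathcal{C}$, $(\forall\exists)(\mathcal{C}) = \EuScript{D}vs(\mathcal{C})$, which is automatically a division set. The base case $\EuScript{L}(0) = \EuScript{O}(A)$ is immediate. In parallel I would carry a subsidiary claim, established by a routine monotonicity induction inside the same recursion, that $\EuScript{D}(\beta) \subseteq \EuScript{L}(\beta) \subseteq (\forall\exists)(\EuScript{D}(\beta))$ for every $\beta \leq \alpha$; this is what ties the $\EuScript{L}$-chain and the $\EuScript{D}$-chain together and permits passing freely between them.

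For the successor step assume $\EuScript{L}(\alpha)$ is a division set. The set $\EuScript{D}(\alpha')$ itself is in general not basic: a subinterval $[a',b']$ of an $\alpha$-simple $[a,b]$ with $a<a'$ lands in $\EuScript{L}(\alpha)$, which may strictly contain $\EuScript{D}(\alpha)$. I would therefore pass to the basic envelope
\[
\mathcal{B}(\alpha') := \EuScript{L}(\alpha) \cup \EuScript{S}[\alpha].
\]
Checking $\mathcal{B}(\alpha')$ is basic is where the real work lies: abstractness of $\EuScript{S}[\alpha]$ transfers from that of $\EuScript{L}(\alpha)$ through the transposition description of $\sim$, and closure under subintervals splits into two cases, namely either the lower endpoint is preserved (the subinterval is still $\alpha$-simple, using basicness of $\EuScript{L}(\alpha)$ to shrink the upper factor from $b$ to $b'$) or it is pushed strictly up (the subinterval then falls into $\EuScript{L}(\alpha)$, again by basicness applied to $[a',b] \in \EuScript{L}(\alpha)$). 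Granted this, $(\forall\exists)(\mathcal{B}(\alpha')) = \EuScript{D}vs(\mathcal{B}(\alpha'))$ is a division set, and I would finish by identifying $\EuScript{L}(\alpha') = (\forall\exists)(\mathcal{B}(\alpha'))$: the inclusion $\subseteq$ combines monotonicity of $(\forall\exists)$ applied to $\EuScript{D}(\alpha') \subseteq \mathcal{B}(\alpha')$ with $\EuScript{L}(\alpha) \subseteq \mathcal{B}(\alpha') \subseteq (\forall\exists)(\mathcal{B}(\alpha'))$, while $\supseteq$ uses the subsidiary claim to drill each witness $[x,y] \in \EuScript{L}(\alpha)$ appearing in the $(\forall\exists)$ condition down to a genuine witness $[x,y'] \in \EuScript{D}(\alpha')$ with $y' \leq y$.

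The limit case $\lambda$ follows the same template with $\mathcal{B}(\lambda) := \bigcup_{\beta<\lambda} \EuScript{L}(\beta)$, basic as a directed union of basic sets from the inductive hypothesis; the drill-down step now pulls each witness down into some $\EuScript{D}(\beta) \subseteq \EuScript{D}(\lambda)$ using the index $\beta<\lambda$ supplied by the directed union. The main obstacle I anticipate is precisely the basicness of $\mathcal{B}(\alpha')$, specifically the mixed-endpoint subinterval clause for $\EuScript{S}[\alpha]$: this is where the full inductive assumption that $\EuScript{L}(\alpha)$ is already a division set (in particular basic) is indispensable, and where modularity of $A$ enters through the similarity relation. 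Everything else is bookkeeping with the monotone operator $(\forall\exists)$ and the frame-theoretic identity $(\forall\exists) = \EuScript{D}vs$ on basic inputs.
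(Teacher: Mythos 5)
Your argument is correct, but it takes a genuinely different route from the paper. The paper's own proof of Theorem \ref{l6} is essentially a citation: abstractness is declared clear, and the congruence and $\bigvee$-closure properties are delegated to Proposition 3.4.1, Corollary 3.4.2 and 3.4.3 of \cite{1}, so no internal induction is carried out at all. What you do instead is a self-contained transfinite induction whose engine is the identification $\EuScript{L}(\alpha')=(\forall\exists)(\EuScript{L}(\alpha)\cup\EuScript{S}[\alpha])$ combined with the identity $(\forall\exists)=\EuScript{D}vs$ on basic inputs. Your auxiliary set $\mathcal{B}(\alpha')=\EuScript{L}(\alpha)\cup\EuScript{S}[\alpha]$ is exactly the paper's $\EuScript{C}(\alpha)=\EuScript{C}rt(\EuScript{L}(\alpha))$ of Definition \ref{l7} and Lemma \ref{l8}; your basicness check for it supplies a proof of the paper's unproved assertion, just before Theorem \ref{l10}, that $\EuScript{C}(\alpha)$ is basic; your subsidiary claim $\EuScript{D}(\beta)\subseteq\EuScript{L}(\beta)\subseteq(\forall\exists)(\EuScript{D}(\beta))$ contains Proposition \ref{l9}; and the identification $\EuScript{L}(\alpha')=(\forall\exists)(\mathcal{B}(\alpha'))$ is precisely Theorem \ref{l10} in the form $\EuScript{G}ab(\EuScript{L}(\alpha))=\EuScript{L}(\alpha')$, with your limit case reproducing the computation in Theorem \ref{l11}. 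In effect you prove Theorems \ref{l6}, \ref{l10} and \ref{l11} simultaneously inside one induction, which buys a self-contained argument and removes the apparent circularity of the paper's ordering (Lemma \ref{l8} and Theorem \ref{l10} already use Theorem \ref{l6}, whose proof lives outside the note), at the cost of front-loading work the paper postpones or outsources. The one place you are slightly glib is calling $\EuScript{D}(\beta)\subseteq\EuScript{L}(\beta)$ a ``routine monotonicity induction'': the inclusion $\EuScript{S}[\alpha]\subseteq\EuScript{L}(\alpha')$ requires the same drill-down through $(\forall\exists)(\EuScript{D}(\alpha))$ that you deploy later, not monotonicity alone, but the second half of your subsidiary claim is exactly the tool that closes this, so it is a presentational rather than a mathematical gap.
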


\begin{proof}
Clearly the set $\EuScript{L}(\alpha)$ is an abstract set. 
Now, for the proofs of the basic congruences and $\bigvee$-closed properties, we invoke  Proposition 3.4.1, Corollary 3.4.2, and 3.4.3 of \cite{1}.
\end{proof}

\begin{dfn}\label{l7}
For each ordinal $\alpha$ let be $$\EuScript{C}(\alpha)=\EuScript{L}(\alpha)\cup\EuScript{S}[\alpha]$$
where $\EuScript{S}[\alpha]$ is the set of all $\alpha'$-simple intervals. 
\end{dfn}

\begin{lem}
\label{l8}
For each ordinal $\alpha$,  $$\EuScript{C}(\alpha)=\EuScript{C}rt(\EuScript{L}(\alpha)).$$
\end{lem}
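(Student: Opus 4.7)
The plan is to prove $\EuScript{C}(\alpha) = \EuScript{C}rt(\EuScript{L}(\alpha))$ by unfolding both definitions and doing two straightforward inclusions, relying crucially on Theorem \ref{l6}, which tells us $\EuScript{L}(\alpha)$ is a division set---in particular, both basic and closed under abutting intervals (congruence).

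For the inclusion $\EuScript{C}(\alpha) \subseteq \EuScript{C}rt(\EuScript{L}(\alpha))$, I would take $[a,b] \in \EuScript{C}(\alpha) = \EuScript{L}(\alpha) \cup \EuScript{S}[\alpha]$ and split into the two cases of the union. If $[a,b] \in \EuScript{L}(\alpha)$, then because $\EuScript{L}(\alpha)$ is basic, the subinterval $[x,b]$ lies in $\EuScript{L}(\alpha)$ for every $a \leq x \leq b$, so the critical condition holds trivially. If instead $[a,b] \in \EuScript{S}[\alpha]$, then by the very definition of $\EuScript{S}[\alpha]$ in Definition \ref{l5}, we have $[x,b] \in \EuScript{L}(\alpha)$ for every $a < x \leq b$, and the leftover case $x = a$ satisfies the critical clause $a = x$. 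Either way, $[a,b] \in \EuScript{C}rt(\EuScript{L}(\alpha))$.

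For the reverse inclusion, suppose $[a,b] \in \EuScript{C}rt(\EuScript{L}(\alpha))$ and distinguish whether $[a,b]$ itself lies in $\EuScript{L}(\alpha)$. If yes, then immediately $[a,b] \in \EuScript{C}(\alpha)$. If not, I claim $[a,b] \in \EuScript{S}[\alpha]$. Fix $a < x \leq b$; since $x \neq a$, the critical condition yields $[x,b] \in \EuScript{L}(\alpha)$. It remains to show $[a,x] \notin \EuScript{L}(\alpha)$. Suppose, for contradiction, that $[a,x] \in \EuScript{L}(\alpha)$. Then, since $\EuScript{L}(\alpha)$ is a congruence set, the two abutting intervals $[a,x]$ and $[x,b]$ glue to give $[a,b] \in \EuScript{L}(\alpha)$, contradicting our assumption. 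Hence $[a,x] \notin \EuScript{L}(\alpha)$, confirming $[a,b] \in \EuScript{S}[\alpha] \subseteq \EuScript{C}(\alpha)$.

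I do not anticipate any serious obstacle; the only subtle point is keeping track of the degenerate case $x = a$ in the definition of $\EuScript{C}rt$, which is exactly what accommodates the disjunct $\EuScript{L}(\alpha)$ in $\EuScript{C}(\alpha)$. The entire argument is a bookkeeping exercise once the two structural facts about $\EuScript{L}(\alpha)$---closure under subintervals and closure under abutting intervals---granted by Theorem \ref{l6} are in hand.
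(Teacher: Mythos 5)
Your proof is correct and follows essentially the same route as the paper's: the forward inclusion splits on the two disjuncts of $\EuScript{C}(\alpha)$ using that $\EuScript{L}(\alpha)$ is basic, and the reverse inclusion uses criticality plus the congruence (abutting) property of $\EuScript{L}(\alpha)$ to force $[a,b]$ into $\EuScript{L}(\alpha)$ or $\EuScript{S}[\alpha]$. The only difference is cosmetic: the paper cases on whether $[a,b]\in\EuScript{S}[\alpha]$ while you case on whether $[a,b]\in\EuScript{L}(\alpha)$, which is the contrapositive organization of the same argument.
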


\begin{proof}
We must show: $$[a,b]\in\EuScript{C}(\alpha)\Leftrightarrow\forall\; a\leq x\leq b\;\; a=x\text{ or } [x,b]\in\EuScript{L}(\alpha)$$.

Assuming $[a,b]\in\EuScript{C}(\alpha)$, then by definition \ref{l7}, if $[a,b]\in\EuScript{L}(\alpha)$, then the conclusion is clear from the fact that $\EuScript{L}(\alpha)$ is in particular basic.
If $[a,b]\in\EuScript{S}[\alpha]$, consider $a\leq x\leq b$; thus by definition of this set we have that $[x,b]\in\EuScript{L}(\alpha)$.

Reciprocally, if $[a,b]\in\EuScript{S}[\alpha]$ there is nothing to prove. Thus, suppose $[a,b]\notin\EuScript{S}[\alpha]$, then there is a $a<x\leq b$ such that $[a,x]\in\EuScript{L}(\alpha)$  or $[x,b]\notin\EuScript{L}(\alpha)$.  But the condition says that $[x,b]\in\EuScript{L}(\alpha)$ and $\EuScript{L}(\alpha)$ is a congruence set, thus $[a,b]\in\EuScript{L}(\alpha)$. 
\end{proof}

\begin{prop}\label{l9}
We have: $$\EuScript{D}(\alpha)\subseteq\EuScript{L}(\alpha)$$ for each ordinal $\alpha$.

\end{prop}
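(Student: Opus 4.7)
The natural plan is transfinite induction on $\alpha$. The base $\alpha = 0$ is immediate since $\EuScript{D}(0) = \EuScript{O}(A) = \EuScript{L}(0)$, and at a limit $\lambda$ the induction hypothesis applied termwise gives
\[
\EuScript{D}(\lambda) = \bigcup_{\beta<\lambda}\EuScript{D}(\beta) \subseteq \bigcup_{\beta<\lambda}\EuScript{L}(\beta) \subseteq \EuScript{L}(\lambda).
\]
All the real work lies in the successor step.

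Before attacking that step, I would record the auxiliary inclusion $\EuScript{L}(\beta) \subseteq (\forall\exists)(\EuScript{D}(\beta))$ for every ordinal $\beta$, proved by a short independent induction that uses only the recursive definition of $\EuScript{L}$ and the monotonicity of the operator $(\forall\exists)$. The base is vacuous, since trivial intervals sit in $(\forall\exists)(\mathcal{C})$ for any $\mathcal{C}$. For a successor $\beta = \delta'$, the definition $\EuScript{L}(\beta) = \EuScript{L}(\delta) \cup (\forall\exists)(\EuScript{D}(\beta))$ combined with $\EuScript{L}(\delta) \subseteq (\forall\exists)(\EuScript{D}(\delta)) \subseteq (\forall\exists)(\EuScript{D}(\beta))$ closes the step, and the limit case is an analogous union argument.

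For the successor step $\alpha = \gamma'$ in the main induction, write $\EuScript{D}(\alpha) = \EuScript{D}(\gamma) \cup \EuScript{S}[\gamma]$. The piece $\EuScript{D}(\gamma) \subseteq \EuScript{L}(\gamma) \subseteq \EuScript{L}(\alpha)$ is immediate from the main inductive hypothesis, so only $\EuScript{S}[\gamma] \subseteq \EuScript{L}(\alpha)$ remains. Fix a nontrivial $[a,b] \in \EuScript{S}[\gamma]$; taking $x = b$ in the defining clause of $\EuScript{S}[\gamma]$ shows $[a,b] \notin \EuScript{L}(\gamma)$, so in view of $\EuScript{L}(\alpha) = \EuScript{L}(\gamma) \cup (\forall\exists)(\EuScript{D}(\alpha))$ the task reduces to proving $[a,b] \in (\forall\exists)(\EuScript{D}(\alpha))$. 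For $a \leq x < b$, the case $x = a$ is handled by the witness $y = b$, since $[a,b]$ itself lies in $\EuScript{S}[\gamma] \subseteq \EuScript{D}(\alpha)$. When $a < x < b$, the defining property of $\EuScript{S}[\gamma]$ yields $[x,b] \in \EuScript{L}(\gamma)$; the auxiliary inclusion promotes this to $[x,b] \in (\forall\exists)(\EuScript{D}(\gamma))$, and specializing the universal clause of $(\forall\exists)$ to $z = x$ produces the desired $x < y \leq b$ with $[x,y] \in \EuScript{D}(\gamma) \subseteq \EuScript{D}(\alpha)$.

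The main obstacle is that the $\EuScript{D}$-chain is not, at this stage, known to be closed under subintervals, so one cannot argue by basicness to produce $[x,b]$ or $[x,y]$ inside $\EuScript{D}(\alpha)$ directly from $[a,b]$. The auxiliary lemma $\EuScript{L}(\beta) \subseteq (\forall\exists)(\EuScript{D}(\beta))$ is the substitute: it converts the basicness of $\EuScript{L}(\gamma)$ (given by Theorem \ref{l6}) into the existential statement needed to extract a $\EuScript{D}(\gamma)$-subinterval above $x$.
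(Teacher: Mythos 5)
Your argument is correct, and it follows the paper's overall skeleton: transfinite induction, the same trivial base and limit cases, and in the successor step the same reduction of $\EuScript{S}[\gamma]\subseteq\EuScript{L}(\gamma')$ to showing $[a,b]\in(\forall\exists)(\EuScript{D}(\gamma'))$ by examining each $a\leq x<b$. The genuine difference is your auxiliary lemma $\EuScript{L}(\beta)\subseteq(\forall\exists)(\EuScript{D}(\beta))$, and it is exactly the right thing to add. At the corresponding point the paper, having obtained $[x,b]\in\EuScript{L}(\alpha)$ from the simplicity of $[a,b]$, writes \lq\lq the induction hypothesis gives $[x,b]\in\EuScript{L}(\alpha)\subseteq\EuScript{D}(\alpha)\subseteq\EuScript{D}(\alpha')$\rq\rq\ and insists that $y=b$ is the witness; but $\EuScript{L}(\alpha)\subseteq\EuScript{D}(\alpha)$ is the \emph{reverse} of the inclusion being proved, is not the induction hypothesis, and is false in general (the $\EuScript{L}$-chain grows by applying the closure-like operator $(\forall\exists)$, while the $\EuScript{D}$-chain only accumulates simple intervals). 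Your lemma repairs this: from $[x,b]\in\EuScript{L}(\gamma)\subseteq(\forall\exists)(\EuScript{D}(\gamma))$, specializing the universal clause at $x$ itself yields \emph{some} $x<y\leq b$ with $[x,y]\in\EuScript{D}(\gamma)\subseteq\EuScript{D}(\gamma')$ --- which is all the $(\forall\exists)$ condition requires; the witness need not be $b$. You also correctly dispose of the degenerate cases (trivial intervals, and $x=a$ handled by $[a,b]\in\EuScript{S}[\gamma]\subseteq\EuScript{D}(\gamma')$ itself). In short, your proof is not merely an alternative route: it is a corrected version of the paper's argument, at the modest cost of one extra (easy) induction.
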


\begin{proof}
By induction, the case $\alpha=0$ being obvious because, $\EuScript{D}(0)=\EuScript{O}(A)=\EuScript{L}(0)$ by definition of these sets. 
For the step $\alpha\mapsto\alpha'$, suppose that $[a,b]\in\EuScript{D}(\alpha')$. The definition of this set gives two possibilities: First, if $[a,b]\in\EuScript{D}(\alpha)$ then from the induction hypothesis $[a,b]\in\EuScript{L}(\alpha)\EuScript{L}(\alpha')$.
Now, if $[a,b]\notin\EuScript{L}(\alpha)$ then $[a,b]\in\EuScript{S}[\alpha]$ and in this case we will show that $[a,b]\in(\forall\exists)(\EuScript{D}(\alpha'))$ and using $(\forall\exists)(\EuScript{D}(\alpha'))\subseteq\EuScript{L}(\alpha')$, we will be done.  To prove our claim, consider $a\leq x<b$. We will produce a $x<y\leq b$ with $[x,y]\in\EuScript{D}(\alpha')$ and show that $y=b$ is the required element.
If $a=x$, there is nothing to prove. If $a\neq x$ then $[a,b]\in\EuScript{S}[\alpha]$ gives $[a,x]\notin\EuScript{L}(\alpha)$ and $[x,b]\in\EuScript{L}(\alpha)$. If  $[a,x]\notin\EuScript{L}(\alpha)$, the induction hypothesis gives $[x,b]\in\EuScript{L}(\alpha)\subseteq\EuScript{D}(\alpha)\subseteq\EuScript{D}(\alpha')$, and we are done.

Now, for the limit case $\lambda$ we have $$\EuScript{D}(\lambda)=\bigcup\left\{\EuScript{D}(\alpha)\;|\;\alpha<\lambda\right\}\subseteq\bigcup\left\{\EuScript{L}(\lambda)\;|\;\alpha<\lambda\right\}\subseteq\EuScript{L}(\lambda),$$  where the inclusion $\bigcup\left\{\EuScript{D}(\alpha)\;|\;\alpha<\lambda\right\}\subseteq\bigcup\left\{\EuScript{L}(\lambda)\;|\;\alpha<\lambda\right\}$ is by the induction hypothesis. 
\end{proof}

From Proposition \ref{l9}, Lemma  \ref{l8} and Definition  \ref{l7}, it follows that $$\EuScript{L}(\alpha)\cup\EuScript{S}[\alpha]=\EuScript{C}(\alpha)=\EuScript{C}rt(\EuScript{L}(\alpha)).$$ 
From the fact that $\EuScript{C}(\alpha)$ is basic upon applying $\EuScript{G}ab$ we have $\EuScript{G}(\EuScript{L}(\alpha))=\EuScript{D}vs(\EuScript{C}(\alpha))=(\forall\exists)(\EuScript{C}(\alpha))$ since the two operators $\EuScript{D}vs$ and $(\forall\exists)$ agree on basic sets.
All this is  summarized in the following
\begin{thm}
\label{l10}
With the above notation we have $$\EuScript{G}ab(\EuScript{L}(\alpha))=\EuScript{L}(\alpha')$$ for each ordinal $\alpha$.

\end{thm}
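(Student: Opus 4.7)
The plan is to unfold the Gabriel operator on $\EuScript{L}(\alpha)$ and match the result against the description of $\EuScript{L}(\alpha')$ supplied by Lemma~\ref{l4}. Most of the argument has already been foreshadowed in the paragraph immediately preceding the theorem; the task is to organise those remarks into a clean two-sided inclusion.

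First I would note that $\EuScript{C}(\alpha)=\EuScript{C}rt(\EuScript{L}(\alpha))$ by Lemma~\ref{l8}, and that this set is basic because $\EuScript{L}(\alpha)$ is a division set (Theorem~\ref{l6}) and $\EuScript{C}rt$ sends basic sets to basic sets (this is the analogue, for $\EuScript{C}rt$, of the remark made in Section~2 for $\EuScript{S}mp$). The observation recorded just after Definition~\ref{l2}, namely that $\mathcal{D}vs$ and $(\forall\exists)$ agree on basic sets, then gives
\[ \EuScript{G}ab(\EuScript{L}(\alpha))=\mathcal{D}vs(\EuScript{C}rt(\EuScript{L}(\alpha)))=\mathcal{D}vs(\EuScript{C}(\alpha))=(\forall\exists)(\EuScript{C}(\alpha)). \]
So the theorem reduces to the identity $(\forall\exists)(\EuScript{C}(\alpha))=\EuScript{L}(\alpha')$, which I would prove by two separate inclusions.

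For $\supseteq$, I would invoke Lemma~\ref{l4} to write $\EuScript{L}(\alpha')=\EuScript{L}(\alpha)\cup(\forall\exists)(\EuScript{D}(\alpha'))$. The summand $\EuScript{L}(\alpha)$ sits inside $(\forall\exists)(\EuScript{C}(\alpha))$ because $\EuScript{L}(\alpha)\subseteq\EuScript{C}(\alpha)$ and any basic set is contained in its $(\forall\exists)$-closure. The summand $(\forall\exists)(\EuScript{D}(\alpha'))$ sits inside $(\forall\exists)(\EuScript{C}(\alpha))$ by monotonicity of $(\forall\exists)$, once one observes that $\EuScript{D}(\alpha')=\EuScript{D}(\alpha)\cup\EuScript{S}[\alpha]\subseteq\EuScript{L}(\alpha)\cup\EuScript{S}[\alpha]=\EuScript{C}(\alpha)$ via Proposition~\ref{l9}. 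For $\subseteq$, I would exploit the fact that $\mathcal{D}vs(\EuScript{C}(\alpha))$ is the \emph{smallest} division set containing $\EuScript{C}(\alpha)$: by Theorem~\ref{l6}, $\EuScript{L}(\alpha')$ is a division set, and Proposition~\ref{l9} applied at $\alpha'$ gives $\EuScript{S}[\alpha]\subseteq\EuScript{D}(\alpha')\subseteq\EuScript{L}(\alpha')$, while $\EuScript{L}(\alpha)\subseteq\EuScript{L}(\alpha')$ is immediate. Hence $\EuScript{C}(\alpha)\subseteq\EuScript{L}(\alpha')$ and consequently $(\forall\exists)(\EuScript{C}(\alpha))=\mathcal{D}vs(\EuScript{C}(\alpha))\subseteq\EuScript{L}(\alpha')$.

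The main obstacle, such as it is, lies not in the inclusions themselves (they are pure bookkeeping between $\EuScript{L}$, $\EuScript{D}$, $\EuScript{C}$ and $\EuScript{S}$) but in the two background facts invoked above: that $\EuScript{C}rt$ of a basic set is basic, and that $\mathcal{D}vs=(\forall\exists)$ on basic sets. Both are supplied by the references cited earlier in the note (in particular Theorem~5.6 of~\cite{5}), so in the write-up one would simply appeal to them rather than re-prove them.
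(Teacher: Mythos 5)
Your proposal is correct and follows essentially the same route as the paper: both rest on the identification $\EuScript{G}ab(\EuScript{L}(\alpha))=\EuScript{D}vs(\EuScript{C}(\alpha))=(\forall\exists)(\EuScript{C}(\alpha))$ from the remark preceding the theorem, and both establish the two inclusions via $\EuScript{D}(\alpha')\subseteq\EuScript{C}(\alpha)\subseteq\EuScript{L}(\alpha')$ (Proposition~\ref{l9}), Lemma~\ref{l4}, monotonicity of $(\forall\exists)$, and the fact that $\EuScript{L}(\alpha')$ is a division set. Your phrasing of the lower bound for $\EuScript{L}(\alpha)$ as ``a basic set is contained in its $(\forall\exists)$-closure'' is just the paper's appeal to $\EuScript{G}ab$ being an inflator in different words.
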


\begin{proof}
From Proposition \ref{l9} and the definition of $\EuScript{D}(\alpha')$ we have $\EuScript{C}(\alpha)=\EuScript{L}(\alpha)\cup\EuScript{S}[\alpha]\subseteq\EuScript{L}(\alpha)\cup\EuScript{D}(\alpha')\subseteq\EuScript{L}(\alpha')$.  It follows that $\EuScript{G}ab(\EuScript{L}(\alpha))=\EuScript{D}vs(\EuScript{C}(\alpha))\subseteq\EuScript{L}(\alpha')$ by the remark before this theorem and the fact that $\EuScript{L}(\alpha')$ is a division set.
For other inclusion we have $\EuScript{D}(\alpha')=\EuScript{D}(\alpha)\cup\EuScript{S}[\alpha]\subseteq\EuScript{L}(\alpha)\cup\EuScript{S}[\alpha]=\EuScript{C}(\alpha)$ again by Proposition \ref{l9}.  From the monotonicity of $(\forall\exists)(\_)$ it follows that $(\forall\exists)(\EuScript{D}(\alpha'))\subseteq(\forall\exists)(\EuScript{C}(\alpha))=\EuScript{G}ab(\EuScript{L}(\alpha))$, and then $\EuScript{L}(\alpha')=\EuScript{L}(\alpha)\cup(\forall\exists)(\EuScript{D}(\alpha'))\subseteq\EuScript{G}ab(\EuScript{L}(\alpha))$ since $\EuScript{G}ab$ is an inflator.
\end{proof}

We can now prove the main result of this note:

\begin{thm}
\label{l11}
With the same notation we have $$\EuScript{L}(\alpha)=\EuScript{G}ab^{\alpha}(\EuScript{O})$$ for each ordinal $\alpha$. Here $\EuScript{O}=\EuScript{O}(A)$.
\end{thm}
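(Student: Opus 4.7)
The obvious route is transfinite induction on $\alpha$, with Theorem \ref{l10} doing the heavy lifting in the successor step and Theorem \ref{l6} (that each $\EuScript{L}(\alpha)$ is a division set) together with Proposition \ref{l9} handling the limit step.

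The base case $\alpha = 0$ is a matter of unpacking definitions: both $\EuScript{L}(0)$ and $\EuScript{G}ab^{0}(\EuScript{O})$ equal $\EuScript{O}(A)$. For the successor step, assuming $\EuScript{L}(\alpha) = \EuScript{G}ab^{\alpha}(\EuScript{O})$, I simply chain together Theorem \ref{l10} and the definition of the ordinal iteration of $\EuScript{G}ab$:
\begin{align*}
\EuScript{L}(\alpha') \;=\; \EuScript{G}ab(\EuScript{L}(\alpha)) \;=\; \EuScript{G}ab(\EuScript{G}ab^{\alpha}(\EuScript{O})) \;=\; \EuScript{G}ab^{\alpha'}(\EuScript{O}).
\end{align*}

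The genuine work is the limit step $\lambda$, where by definition $\EuScript{G}ab^{\lambda}(\EuScript{O}) = \mathcal{D}vs\bigl(\bigcup\{\EuScript{G}ab^{\beta}(\EuScript{O}) \mid \beta<\lambda\}\bigr)$ while $\EuScript{L}(\lambda) = \bigcup\{\EuScript{L}(\alpha)\mid \alpha<\lambda\} \cup (\forall\exists)(\EuScript{D}(\lambda))$. For the inclusion $\EuScript{G}ab^{\lambda}(\EuScript{O}) \subseteq \EuScript{L}(\lambda)$ the strategy is: by the inductive hypothesis the union $\bigcup_{\alpha<\lambda}\EuScript{G}ab^{\alpha}(\EuScript{O}) = \bigcup_{\alpha<\lambda}\EuScript{L}(\alpha)$ is contained in $\EuScript{L}(\lambda)$, and since $\EuScript{L}(\lambda)$ is a division set by Theorem \ref{l6}, it must also contain the least division set generated by this union, which is exactly $\EuScript{G}ab^{\lambda}(\EuScript{O})$.

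For the reverse inclusion $\EuScript{L}(\lambda) \subseteq \EuScript{G}ab^{\lambda}(\EuScript{O})$, I handle the two pieces separately. The union $\bigcup_{\alpha<\lambda}\EuScript{L}(\alpha)$ equals $\bigcup_{\alpha<\lambda}\EuScript{G}ab^{\alpha}(\EuScript{O})$ by induction, and this sits inside its own $\mathcal{D}vs$-closure $\EuScript{G}ab^{\lambda}(\EuScript{O})$. For the term $(\forall\exists)(\EuScript{D}(\lambda))$, Proposition \ref{l9} gives $\EuScript{D}(\lambda) \subseteq \bigcup_{\alpha<\lambda}\EuScript{L}(\alpha) \subseteq \EuScript{G}ab^{\lambda}(\EuScript{O})$; since $\EuScript{G}ab^{\lambda}(\EuScript{O})$ is itself basic (in fact a division set), monotonicity of $(\forall\exists)$ together with the identification $(\forall\exists) = \mathcal{D}vs$ on basic sets yields $(\forall\exists)(\EuScript{D}(\lambda)) \subseteq (\forall\exists)(\EuScript{G}ab^{\lambda}(\EuScript{O})) = \EuScript{G}ab^{\lambda}(\EuScript{O})$. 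The main conceptual obstacle here is just recognising that the slightly asymmetric definition of $\EuScript{L}(\lambda)$ (a plain union plus a $(\forall\exists)$-closure) really does coincide with the uniform $\mathcal{D}vs$-closure used in the Gabriel iteration; once Theorem \ref{l6} and Proposition \ref{l9} are available, both inclusions reduce to monotonicity arguments.
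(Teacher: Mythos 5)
Your proposal is correct and follows essentially the same route as the paper: transfinite induction with Theorem \ref{l10} carrying the successor step, and the limit step resolved by identifying $\bigcup_{\alpha<\lambda}\EuScript{L}(\alpha)$ with $\bigcup_{\alpha<\lambda}\EuScript{G}ab^{\alpha}(\EuScript{O})$ and then using Proposition \ref{l9}, the monotonicity of $(\forall\exists)$, and the agreement of $(\forall\exists)$ with $\mathcal{D}vs$ on basic sets. If anything, you make the inclusion $\EuScript{G}ab^{\lambda}(\EuScript{O})\subseteq\EuScript{L}(\lambda)$ (via Theorem \ref{l6}) more explicit than the paper does, which leaves that direction largely implicit.
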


\begin{proof}
By induction on $\alpha$ , the base case $\alpha=0$, being clear. The induction step is just Theorem \ref{l10}. For the limit case $\lambda$ let $\mathfrak{L}=\bigcup\left\{\EuScript{L}(\alpha)\;|\; \alpha<\lambda\right\}$. Since $\EuScript{L}(\alpha)$ is basic for each ordinal, then $\mathfrak{L}$ is also basic. Thus, the induction hypothesis gives $$\EuScript{G}ab^{\lambda}(\EuScript{O})=\EuScript{D}vs(\mathfrak{L}),$$ 
and by the accumulative $\EuScript{L}$-construction $$\EuScript{L}(\lambda)=\mathfrak{L}\cup(\forall\exists)(\EuScript{D}(\lambda))$$ 
and
\begin{align*}
(\forall\exists(\EuScript{D}(\lambda)))&=(\forall\exists)(\bigcup\left\{\EuScript{D}(\alpha)\;|\;\alpha<\lambda\right\})\\
&=(\forall\exists)(\bigcup\left\{\EuScript{D}(\alpha')\;|\;\alpha<\lambda\right\})\subseteq (\forall\exists)(\bigcup\left\{\EuScript{L}(\alpha')\;|\;\alpha<\lambda\right\})=(\forall\exists)(\mathfrak{L})\\
&=\EuScript{D}vs(\mathcal{L})
\end{align*}
where the first equality is the definition of $\EuScript{D}(\lambda)$ in the limit case, the second equality is because the construction $\EuScript{D}(-)$ is an ascending chain. The inclusion in the second row is from Theorem \ref{l10} and the monotonicity of $(\forall\exists)(\_)$.  The last equality is because the operators $\EuScript{D}vs$ and $(\forall\exists)$ agree on basic sets. Finally, with this and the description of $\EuScript{L}(-)$ in the limit case we conclude that $$\EuScript{L}(\lambda)=\mathfrak{L}\cup\EuScript{D}vs(\mathfrak{L})=\EuScript{D}vs(\mathfrak{L})=\EuScript{G}ab^{\lambda}(\EuScript{O})$$

\end{proof}

\bibliographystyle{model1-num-names}

\end{document}